\theoremstyle{plain}
\newtheorem{thm}{Theorem}[section]
\newtheorem{lem}[thm]{Lemma}
\newtheorem{prop}[thm]{Proposition}
\theoremstyle{definition}
\newcommand{\mZ}{\mathbb Z}
\begin{document}
\title{The nonexistence of expansive polycyclic group actions on the circle $\mathbb S^1$}

\author[E.~Shi]{Enhui Shi}
\address[E. Shi]{Soochow University, Suzhou, Jiangsu 215006, China}
\email{ehshi@suda.edu.cn}

\author[S.~Wang]{Suhua Wang}
\address[S.~Wang]{Suzhou Vocational University, Suzhou, Jiangsu 215104, China}
\email{wangsuhuajust@163.com}

\author[Z.~Xie]{Zhiwen Xie}
\address[Z.~Xie]{Soochow University, Suzhou, Jiangsu 215006, China}
\email{20204007002@stu.suda.edu.cn}

\author[H.~Xu]{Hui Xu}
\address[H. Xu]{CAS Wu Wen-Tsun Key Laboratory of Mathematics, University of Science and
Technology of China, Hefei, Anhui 230026, China}
\email{huixu2734@ustc.edu.cn}

\keywords{expansivity, polycyclic group, topological transitivity, circle}

\subjclass[2010]{54H20, 37B20}

\maketitle


\begin{abstract}
We show that the circle $\mathbb S^1$ admits no expansive polycyclic group actions.
\end{abstract}

\pagestyle{myheadings} \markboth{E. Shi, S. Wang, Z. Xie, and H. Xu}{The nonexistence of expansive polycyclic group actions}

\section{Introduction}

 Expansivity  is closely related to the structural stability in differential dynamical systems.
Which spaces can admit an expansive homeomorphism has been intensively studied. It is known that the Cantor set, the $2$-adic solenoid, and the tori $\mathbb T^n$ with $n\geq 2$ admit expansive homeomorphisms.  O'Brien and Reddy showed that every compact orientable surface of positive genus admits an expansive
homeomorphism \cite{OR}. However, the circle  admits no expansive homeomorphisms \cite{JU}. It was proved
by Kato and Mouron that several classes of one-dimensional continua admit no expansive homeomorphisms \cite{Ka90, Ka96, Mo02, Mo09}.
Hiraide obtained the nonexistence of expansive homeomorphisms on the sphere $\mathbb S^2$ (see \cite{Hi}).
 Ma\~n\'e showed that no infinite dimensional compact metric space admits an expansive homeomorphism,
and no compact metric space with positive dimension admits a minimal expansive homeomorphism \cite{Mane}; the latter
result of Ma\~n\'e was extended to the case of pointwise recurrence by Shi, Xu, and Yu very recently \cite{SXY}.

\medskip

T. Ward once asked whether the circle $\mathbb S^1$ can admit an expansive nilpotent group action. This question was
answered in  negative by Connell, Furman and Hurder using the technique of semi-ping-pong in an unpublished paper, which is also implied by
Margulis' work \cite{Mar}. Mai, Shi, and Wang showed the nonexistence of expansive actions by commutative or nilpotent groups on Peano continua
with a free dendrite \cite{MS07, SW}. Recently, Liang, Shi, Xu, and Xie showed that if the group $G$ is of subexponential growth
and $X$ is a Suslinian continuum, then $G$ cannot act on $X$ expansively \cite{LSXX}.  Contrary to the case of $\mathbb Z$ action,
Shi and Zhou constructed an expansive $\mathbb Z^2$ action on an infinite dimensional continuum \cite{SZ}; Meyerovitch and Tsukamoto
constructed a minimal expansive $\mathbb Z^2$ action on a compact metric space $X$ with ${\rm dim}(X)>0$ (see \cite{MT19}). Mouron constructed for each positive integer $n$,
a continuum $X$ which admits an expansive $\mathbb Z^{n+1}$ action but admits no expansive $\mathbb Z^n$ actions \cite{Mo10}. These examples
indicate that there are essential differences between expansive $\mathbb Z$ actions and expansive $\mathbb Z^n$ actions with $n>1$.
One may consult \cite{BL, CL15, LS99} for some interesting studies around expansive $\mathbb Z^n$ actions and consult \cite{SX}
for an application of expansive $\mathbb Z^2$ actions to Ramsey theory.
\medskip

The purpose of the paper is to continue the study of the existence of expansive group actions on continua.
Explicitly, we obtain the following theorem.

\begin{thm}\label{mainthm}
The circle $\mathbb S^1$ admits no expansive polycyclic group actions.
\end{thm}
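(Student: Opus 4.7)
The plan is to derive a contradiction from the assumption that a polycyclic group $G$ acts expansively on $\mathbb{S}^1$ with expansive constant $c>0$. After replacing $G$ by its orientation-preserving subgroup (of index at most two), I may assume $G \leq \mathrm{Homeo}^+(\mathbb{S}^1)$. Since polycyclic groups are amenable, the action admits an invariant Borel probability measure $\mu$, and the standard rotation number construction yields a homomorphism $\rho : G \to \mathbb{R}/\mathbb{Z}$ together with a monotone degree-one semiconjugacy $\pi : \mathbb{S}^1 \to \mathbb{S}^1$ intertwining the $G$-action with the rotation action of $\rho(G)$.

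My first task would be to produce a finite $G$-orbit. If $\mu$ has atoms, the set of atoms of maximal mass is finite and $G$-invariant. If $\mu$ is non-atomic with full support, then $\pi$ is a homeomorphism and $G$ acts by isometries of $\mathbb{S}^1$, which cannot be expansive on an infinite compact metric space. If $\mu$ is non-atomic with proper support, the support is a $G$-invariant Cantor set $\Lambda$ whose complementary arcs form a countable collection permuted by $G$. In this remaining case I would show that a finite orbit still exists, by exploiting that $\rho(G)$ is a finitely generated abelian subgroup of $\mathbb{T}$ and that the kernel $K = \ker \rho$ acts on $\Lambda$ very rigidly---each element of $K$ can at most swap the two endpoints of a complementary arc---so that combining this near-equicontinuity with expansivity forces some finite $G$-invariant set of endpoints to exist.

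Given a finite $G$-orbit $F \subset \mathbb{S}^1$, I would pass to the finite-index subgroup $G_0 \leq G$ fixing $F$ pointwise. Then $G_0$ acts expansively on each closed arc between consecutive points of $F$, reducing the theorem to the claim that no polycyclic group acts expansively on a closed interval $[0,1]$. I would establish this by induction on the Hirsch length $h(G_0)$. The base case $h(G_0) \leq 1$ reduces to the classical Jakobsen--Utz theorem that $\mathbb{Z}$ does not act expansively on $[0,1]$. For the inductive step, I would choose a normal subgroup $N \triangleleft G_0$ with $G_0/N \cong \mathbb{Z}$ and $h(N) = h(G_0)-1$, analyze how a generator of the quotient permutes the fixed-point set of $N$ inside $[0,1]$, and apply the inductive hypothesis to the restricted $N$-action on components complementary to these fixed points to contradict the existence of a uniform expansive constant.

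The main obstacle is expected to be the interval case (and, parallel to it, the Cantor minimal set case). Expansivity for group actions on $[0,1]$ is subtler than for a single homeomorphism, since different pairs of points may be separated by different group elements; one therefore needs a mechanism that simultaneously controls every orbit pair. The crucial input is that every element of $\mathrm{Homeo}^+([0,1])$ fixes both endpoints, so along the polycyclic filtration the fixed-point sets of successive normal subgroups interlock in a way that should produce distinct points whose full $G_0$-orbit remains within distance less than $c$, contradicting expansivity.
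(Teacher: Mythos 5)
Your overall architecture (reduce the circle to the interval via a finite orbit or a minimal-set trichotomy, then kill the interval case) matches the paper's, and your atomic and full-support cases are fine. But there are two genuine gaps. First, in the case where the invariant measure is non-atomic with Cantor support, your plan to produce a finite $G$-orbit cannot work: if the kernel $K=\ker\rho$ fixed the (dense) set of endpoints of complementary arcs, then $K$ would fix the Cantor set $\Lambda$ pointwise, and a finite orbit inside $\Lambda$ would contradict the fact that every orbit in a minimal infinite set is infinite; already for Denjoy-type $\mathbb Z$-actions there is simply no finite orbit. The correct move here (the paper's Case 2) is the opposite of yours: take a maximal complementary arc $(\alpha,\beta)$, note its stabilizer $H$ has \emph{infinite} index, and show $([\alpha,\beta],H)$ is still expansive because all but finitely many coset translates $g_i(\alpha,\beta)$ are pairwise disjoint and hence have diameter below the expansivity constant, the remaining finitely many being handled by uniform continuity. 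This reduces to the interval case without ever producing a finite orbit.

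Second, your induction on Hirsch length for the interval case does not close in the critical situation where the chosen normal subgroup $N$ has no fixed point in the open interval, i.e.\ $\mathrm{Fix}(N)\cap(0,1)=\emptyset$: then the only ``component complementary to the fixed points'' is $(0,1)$ itself, its stabilizer is all of $G_0$, and nothing of smaller Hirsch length acts expansively on anything you can identify. This is exactly where the paper needs a nontrivial external input, namely Plante's theorem that a polycyclic group acting on $\mathbb R$ carries a nontrivial quasi-invariant Radon measure. The paper first extracts a subgroup $H$ and an open interval on which $H$ acts minimally and expansively (Proposition \ref{minimal open interval}), then uses Plante's measure to conjugate that minimal action into $\mathrm{Aff}(\mathbb R)$, and finally uses the Noetherian property of polycyclic groups (Lemma \ref{noncyclic}) to force the conjugated action to be isometric, contradicting expansivity. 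Note also that a non-virtually-nilpotent polycyclic group contains a free subsemigroup, so no equicontinuity or ping-pong shortcut is available; some substitute for the quasi-invariant measure is unavoidable, and your outline does not supply one.
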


Here we give some remarks on the condition in the main theorem. It is known that a polycyclic group may
have exponential growth. In fact, Wolf showed that a polycyclic group is either virtually nilpotent or
has exponential growth \cite{Wo}. Thus Theorem \ref{mainthm} is not implied by the main theorem in \cite{LSXX}.
In addition, Rosenblatt proved that every finitely generated solvable group either is virtually nilpotent or
contains a free non-abelian subsemigroup \cite{Ro}. So a non-virtually-nilpotent polycyclic group must contain
a free non-abelian subsemigroup; and thus the argument of using semi-ping-pong technique does not work in this
case. The proof of Theorem \ref{mainthm} relies on a detailed description of the structure of any
expansive subgroup of ${\rm Homeo}([0, 1])$ and the existence of quasi-invariant Radon measure for any polycyclic
subgroup of ${\rm Homeo}(\mathbb R)$ established by Plante \cite{Pl}. At last, we should note that
there does exist an expansive solvable group action on $[0, 1]$ (and so does on $\mathbb S^1$) (see e.g. \cite{SZ}).

\section{Existence of minimal open intervals}

Let us first recall some definitions around group actions.
Given a group $G$ and a topological space $X$. Let ${\rm Homeo}(X)$ be
the homeomorphism group of $X$.  A group homomorphism $\phi: G\rightarrow {\rm Homeo}(X)$ is called a {\it continuous
action} of $G$ on $X$; we use the symbol $(X, G, \phi)$ to denote this action and also call it a {\it dynamical system}.
 For brevity, we usually use $gx$ or $g(x)$ instead of $\phi(g)(x)$
and use $(X, G)$ instead of $(X, G, \phi)$ if no confusion occurs. For
$x\in X$, the set $Gx:=\{gx: g\in G\}$ is call the {\it orbit} of $x$ under the action of $G$;
if $Gx=\{x\}$, then $x$ is called a {\it fixed point} of $G$; if $Gx$ is finite, then $x$ is called
a {\it periodic point} of $G$; if $Gx$ is dense in $X$, then
the action $(X, G)$ is called {\it topologically transitive} and $x$ is called a topologically transitive point of $(X, G)$;
if every point of $X$ is transitive, then $(X, G)$ is called {\it minimal}. A subset $E$ of $X$ is said to be
{\it $G$-invariant} if $Gx\subset E$ for every $x\in E$; thus if $E$ is $G$-invariant,
we naturally get a {\it restriction action} $(E, G|_E)$ of $G$ on $E$; $(E, G|_E)$ is called a
{\it subaction} or a {\it subsystem} of $(X, G)$. It is well known that
if $X$ is a Polish space and $G$ is countable, then $(X, G)$ is topologically transitive
if and only if for every nonempty open sets $U, V$ in $X$, there is some $g\in G$ such that
$gU\cap V\not=\emptyset$; if $X$ is a compact metric space, $(X, G)$ is minimal if and only if
it contains no proper closed subsystem.  If $X$ is a compact metric space with metric
$d$, then the action $(X, G)$ is called {\it expansive} if there is some $c>0$ such that
for every $x\not= y\in X$, there is some $g\in G$ such that $d(gx, gy)>0$; such $c$ is called
an {\it expansivity constant} of $(X, G)$.

\begin{lem}\label{transitive open set}
Let $G$ be a countable group acting continuously on the closed interval $[0, 1]$. If the action is expansive, then there is a $G$-invariant nonempty open set
$U$ in $[0, 1]$ such that the subsystem $(U, G|_U)$ is topologically transitive.
\end{lem}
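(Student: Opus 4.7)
The plan is to reduce the existence of a topologically transitive invariant open subsystem to the existence of a single point $x \in [0,1]$ whose orbit closure $\overline{Gx}$ has nonempty interior. Assuming such an $x$, set $U := \inte(\overline{Gx})$. Then $U$ is nonempty, open, and $G$-invariant because $g\overline{Gx} = \overline{Gx}$ for every $g \in G$; moreover $U \subseteq \overline{Gx}$ is a nonempty open set and therefore meets $Gx$, so picking $y \in U \cap Gx$ we have $Gy = Gx$ dense in $\overline{Gx} \supseteq U$, while $Gy \subseteq U$ by invariance. Thus $y$ has dense orbit in $U$, and since $G$ is countable and $U$ Polish, the standard Birkhoff-type criterion implies that $(U, G|_U)$ is topologically transitive.

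To produce such an $x$, my first move is to invoke Zorn's lemma to fix a minimal nonempty closed $G$-invariant subset $M \subseteq [0,1]$. If $\inte(M) \ne \emptyset$ we are done, because minimality gives $\overline{Gx} = M \supseteq \inte(M)$ for every $x \in M$, so any $x \in \inte(M)$ works in the reduction above. Otherwise $M$ is nowhere dense in $[0,1]$, hence totally disconnected; a standard dichotomy for minimal sets under countable group actions then shows that $M$ is either a finite $G$-orbit or a Cantor set on which $G$ acts minimally.

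The residual totally-disconnected case is the main obstacle, and it is here that expansivity must enter in an essential way. The plan is to study the $G$-action on the open complement $[0,1] \setminus M$, which is a disjoint union of open intervals permuted by $G$. For any two distinct points $x \ne y$ in the same component $I$, expansivity with constant $c$ yields some $g \in G$ with $d(gx, gy) > c$; since $gI$ is again a component of $[0,1]\setminus M$ (as $g$ preserves $M$ and is a homeomorphism) containing both $gx$ and $gy$, we get $\diam(gI) > c$. There are only finitely many pairwise disjoint open subintervals of $[0,1]$ of diameter exceeding $c$, so the $G$-orbit of every component passes through this finite family of ``large'' components. Combining this finiteness with an iterative refinement on the poset of $G$-invariant open subsets, I aim to extract an invariant open $U$ (for instance the $G$-orbit of a suitably chosen large component, refined further if the induced action is still not transitive) on which the action is topologically transitive, with expansivity bounding the depth of any such refinement.
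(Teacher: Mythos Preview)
Your first-paragraph reduction is correct and is exactly how the paper closes the argument: once some $\overline{Gx}$ has nonempty interior, $U := \inte(\overline{Gx})$ is a $G$-invariant open set on which the action is topologically transitive.

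The genuine gap is in your third paragraph. Your observation that every complementary component of $M$ has some $G$-translate of diameter $>c$ is correct, but it does not by itself produce a transitive invariant open set; it only tells you that each $G$-orbit of components meets the finite set of ``large'' components. To pass from there to transitivity you propose to recurse on the stabilizer action on a single component, but you neither verify that this subaction is again expansive (when the stabilizer has infinite index this is not automatic and needs an argument of the type in Claim~C of Proposition~\ref{minimal open interval}) nor justify the claim that ``expansivity bounds the depth'': the expansivity constant of the subaction may degrade at each stage, so no uniform bound on the recursion is evident. The paper avoids this entire detour through minimal sets by choosing, in place of your $M$, the set $A=\bigcup_{i=1}^n \overline{Gx_i}$ for a fixed partition $0=x_1<\cdots<x_n=1$ with $x_{i+1}-x_i<c$. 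The point is that $A$ is closed, $G$-invariant, and \emph{by construction} every component of $[0,1]\setminus A$ misses all the $x_i$ and hence has diameter $<c$; so if each $\overline{Gx_i}$ were nowhere dense, $A$ would be nowhere dense and any two points in a component of $[0,1]\setminus A$ would stay within $c$ of each other under every $g\in G$, contradicting expansivity. Thus some $\overline{Gx_{i_0}}$ has nonempty interior, and your own first paragraph finishes the proof --- no Zorn, no minimal-set trichotomy, no recursion.
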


\begin{proof}
Let $c>0$ be an expansivity constant for the action $([0, 1], G)$.  Take $0=x_1<x_2<\cdots<x_{n-1}<x_n=1$ such that $x_{i+1}-x_i<c$ for each $i$.
Set $A=\cup_{i=1}^n\overline{Gx_i}$. Then $A$ is a $G$-invariant closed subset in $[0, 1]$. If for each $i$, $\overline{Gx_i}$ is nowhere dense,
then $A$ is nowhere dense. Thus $[0, 1]\setminus A$ is nonempty and open. Choose $x\not=y$ in the same connected component of $[0, 1]\setminus A$.
Then $|gx-gy|<c$ for each $g\in G$, which contradicts the expansivity of $([0, 1], G)$. So there exists some $i_0$ such that the
interior $U$ of $\overline{Gx_{i_0}}$ is nonempty. Clearly, $U$ is $G$-invariant and $(U, G|_U)$ is topologically transitive.
\end{proof}

The following lemma is well known and easy to be checked.

\begin{lem}\label{finite index expansive}
Let $X$ be a compact metric space and let $G$ be a group acting continuously on $X$. Suppose $H$ is a finite index subgroup of $G$.
If $(X, G)$ is expansive, then so is the subgroup action $(X, H)$.
\end{lem}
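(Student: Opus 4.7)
The plan is to reduce expansivity for $H$ to expansivity for $G$ by using finite-index to pay only a uniformly bounded distortion cost when passing from a $G$-element to the corresponding $H$-element. Since $[G:H]=n<\infty$, I would fix a transversal $g_1,\dots,g_n\in G$ so that $G=\bigsqcup_{i=1}^n g_iH$; then any $g\in G$ factors as $g=g_i h$ for a unique $i$ and some $h\in H$, and in particular $gx=g_i(hx)$ for every $x\in X$.

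Next I would exploit compactness. Each $g_i\in\mathrm{Homeo}(X)$ is uniformly continuous on $X$, so for the expansivity constant $c>0$ of $(X,G)$ and each $i$ there exists $\delta_i>0$ with
\[
d(u,v)\le \delta_i \ \Longrightarrow\ d(g_i u,g_i v)\le c,
\]
equivalently (by contrapositive) $d(g_i u,g_i v)>c$ forces $d(u,v)>\delta_i$. Set $c':=\min_{1\le i\le n}\delta_i>0$, which is positive since the transversal is finite.

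Now fix any $x\neq y$ in $X$. By expansivity of $(X,G)$, pick $g\in G$ with $d(gx,gy)>c$, and write $g=g_i h$ with $h\in H$. Then $d(g_i(hx),g_i(hy))=d(gx,gy)>c$, so by the choice of $\delta_i$ we get $d(hx,hy)>\delta_i\ge c'$. Hence $c'$ is an expansivity constant for $(X,H)$.

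I do not expect any serious obstacle: the argument is a bookkeeping application of uniform continuity plus the fact that only finitely many coset representatives appear. The only point worth noting is that both uniform continuity and the finite minimum producing $c'>0$ use compactness of $X$ and finiteness of $[G:H]$ in an essential way; without either, the same argument would fail.
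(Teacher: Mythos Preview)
Your argument is correct and is exactly the standard one: factor each $g\in G$ through a fixed finite transversal and use uniform continuity of the finitely many coset representatives to obtain a positive expansivity constant for $H$. The paper itself does not supply a proof, stating only that the lemma is well known and easy to check, so there is nothing further to compare.
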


\begin{prop}\label{minimal open interval}
Let $G$ be a group acting continuously on the closed interval $[0, 1]$. If the action is expansive, then there is a subgroup $H$ of $G$,
and an $H$-invariant open interval $(a, b)$ in $[0, 1]$ such that the restriction action $((a, b), H|_{(a, b)})$ is minimal and
$([a, b], H|_{[a, b]})$ is expansive.
\end{prop}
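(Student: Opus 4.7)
The plan is to use Lemma \ref{transitive open set} to pass to a topologically transitive action on an open subinterval (via the stabilizer of a component), and then to refine that action to a minimal one by a Zorn-type iteration.

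First, after reducing to a countable subgroup of $G$ that remains expansive (a routine $\sigma$-compactness argument on $[0,1]^{2}\setminus\Delta$), I apply Lemma \ref{transitive open set} to obtain a $G$-invariant nonempty open set $U\subseteq[0,1]$ on which $(U,G|_{U})$ is topologically transitive. The connected components of $U$ are open subintervals, clopen in $U$, so any dense $G$-orbit must hit each; hence $G$ acts transitively on the set of components. Fix a component $C_{0}=(a_{0},b_{0})$ and set $H_{0}=\mathrm{Stab}_{G}(C_{0})$. A transitive point $x\in C_{0}$ satisfies $H_{0}x=Gx\cap C_{0}$, which is dense in $C_{0}$, so $(C_{0},H_{0}|_{C_{0}})$ is topologically transitive. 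To show $([a_{0},b_{0}],H_{0}|_{[a_{0},b_{0}]})$ is expansive, I would use a ``finite long-components plus uniform continuity'' trick: letting $c$ be the expansivity constant of $G$, only finitely many components of $U$ have length exceeding $c$, so finitely many cosets $g_{1}H_{0},\dots,g_{n}H_{0}$ send $C_{0}$ to a long component. Any $g\in G$ with $|gx'-gy'|>c$ for some $x'\ne y'$ in $[a_{0},b_{0}]$ must send $C_{0}$ to a long component, so $g=g_{i}h$ for some $h\in H_{0}$; uniform continuity of $g_{1},\dots,g_{n}$ then produces a uniform $\delta>0$ with $|hx'-hy'|>\delta$, giving the desired expansivity.

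Second, if $(C_{0},H_{0}|_{C_{0}})$ is already minimal, take $H=H_{0}$ and $(a,b)=C_{0}$. Otherwise some $y\in C_{0}$ satisfies $\overline{H_{0}y}\cap C_{0}\ne C_{0}$; pick a connected component $(a_{1},b_{1})$ of $C_{0}\setminus\overline{H_{0}y}$ and set $H_{1}=\mathrm{Stab}_{H_{0}}((a_{1},b_{1}))$. The transitive point $x$ cannot lie in $\overline{H_{0}y}$, for otherwise $\overline{H_{0}x}\subseteq\overline{H_{0}y}$ would fail to be dense in $C_{0}$, contradicting the established topological transitivity; hence $H_{0}$ acts transitively on the components of $C_{0}\setminus\overline{H_{0}y}$. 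Repeating the arguments above verbatim, $((a_{1},b_{1}),H_{1})$ is topologically transitive with expansive closure, and $(a_{1},b_{1})$ is a proper subinterval of $(a_{0},b_{0})$. To organise this refinement, I would apply Zorn's Lemma to the poset $\mathcal{P}$ of all pairs $((a,b),H)$ with $H\le G$, $(a,b)\subseteq[0,1]$ a nonempty $H$-invariant open interval and $([a,b],H)$ expansive, ordered by $((a',b'),H')\preceq((a,b),H)$ iff $(a',b')\subseteq(a,b)$ and $H'\le H$. A $\preceq$-minimal element of $\mathcal{P}$ must then have minimal action on its open interval, for otherwise the refinement step furnishes a strictly smaller element.

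The hard part is verifying that every $\preceq$-chain in $\mathcal{P}$ admits a lower bound. For a descending chain $\{((a_{\alpha},b_{\alpha}),H_{\alpha})\}$, the candidate lower bound is $([a_{\infty},b_{\infty}],H_{\infty})$ with $[a_{\infty},b_{\infty}]=\bigcap_{\alpha}[a_{\alpha},b_{\alpha}]$ and $H_{\infty}=\bigcap_{\alpha}H_{\alpha}$, and one needs to check both that $a_{\infty}<b_{\infty}$ and that $([a_{\infty},b_{\infty}],H_{\infty})$ is still expansive. Non-degeneracy follows from the bound $b_{\alpha}-a_{\alpha}\ge c_{\alpha}$ (apply expansivity of $H_{\alpha}$ on $[a_{\alpha},b_{\alpha}]$ to the endpoints), provided one controls how the constants $c_{\alpha}$ propagate through the ``finite long-components'' step; preservation of expansivity at the limit should follow from a compactness argument on $[a_{\infty},b_{\infty}]^{2}\setminus\Delta$. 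A helpful simplification is that any strictly decreasing transfinite sequence of subintervals of $[0,1]$ has countable length (since no strictly monotone $\omega_{1}$-sequence embeds in $\R$), so only countable limits need to be analysed; controlling the expansivity constants along such countable limits is the technical crux of the argument.
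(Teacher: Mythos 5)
Your first stage (passing to a transitive component $C_0$ of $U$, taking its stabilizer $H_0$, and proving expansivity of $([a_0,b_0],H_0|_{[a_0,b_0]})$ via the ``finitely many long components plus uniform continuity'' argument) is correct, and is essentially the same coset argument the paper uses for its Claim~C. The gap is in the second stage: you reduce minimality to a Zorn's Lemma argument whose chain condition you explicitly leave unverified, and that condition is the entire difficulty --- indeed it is doubtful that it holds. For a strictly descending chain $((a_\alpha,b_\alpha),H_\alpha)$ there is no control on the expansivity constants $c_\alpha$ produced by your refinement step (each new $\delta$ comes from uniform continuity of finitely many group elements and can be arbitrarily small), so the bound $b_\alpha-a_\alpha\ge c_\alpha$ gives no positive lower bound on the limit interval. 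Worse, $H_\infty=\bigcap_\alpha H_\alpha$ can easily be trivial, since each $H_{\alpha+1}$ may have infinite index in $H_\alpha$; in that case $([a_\infty,b_\infty],H_\infty)$ cannot be expansive unless the interval degenerates to a point. So the poset $\mathcal{P}$ need not have $\preceq$-minimal elements, and the argument does not close.

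The paper avoids iteration altogether with one extra idea that your proof is missing: let $NT(U)$ denote the set of nontransitive points of $(U,G|_U)$ and show, using expansivity exactly as in the proof of Lemma~\ref{transitive open set}, that $U\setminus\overline{NT(U)}$ is nonempty (if $NT(U)$ were dense in $U$, finitely many nowhere dense orbit closures $\overline{Gx_i}$ with $x_i\in NT(U)$ would cut $U$ into $G$-invariant open pieces, each of diameter less than $c$ under every $g\in G$). A maximal open interval $(a,b)$ of $U\setminus\overline{NT(U)}$ consists entirely of transitive points of $(U,G|_U)$; its stabilizer $H$ satisfies $g(a,b)\cap(a,b)=\emptyset$ for $g\notin H$, so $Hz=Gz\cap(a,b)$ is dense in $(a,b)$ for \emph{every} $z\in(a,b)$, which is minimality in one step. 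Replace your transfinite refinement with this argument; your expansivity argument then applies verbatim to $(a,b)$.
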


\begin{proof}
Fix an expansivity constant $c>0$ for the action $([0, 1], G)$. By Lemma \ref{transitive open set},
we can take a $G$-invariant nonempty open set $U$ in $[0, 1]$ with $(U, G|_U)$ being topologically transitive.
Let $NT(U)$ be the set of all nontransitive points of $(U, G|_U)$.
\medskip

{\bf Claim A.} $U\setminus \overline{NT(U)}$ is nonempty. Otherwise, we can take $x_1<x_2<\cdots<x_n$ in $NT(U)$
such that  each connected component of $U\setminus\{x_1, x_2, \cdots, x_n\}$ has diameter less than $c$. Since
each $x_i$ is a nontransitive point, the closure $\overline{Gx_i}$ contains no interior point. Thus
$U\setminus \cup_{i=1}^n\overline{Gx_i}$ is nonempty and $G$-invariant. Take a connected component $A$ of $U\setminus \cup_{i=1}^n\overline{Gx_i}$.
Then the diameter ${\rm diam}(gA)<c$ for each $g\in G$. This contradicts the expansivity of $([0, 1], G)$. Thus Claim A holds.
\medskip

From Claim A, we can take a maximal open interval $(a, b)$ in $U\setminus \overline{NT(U)}$. Let $H=\{g\in G: g(a, b)=(a, b)\}$.
\medskip

{\bf Claim B.} $((a, b), H|_{(a, b)})$ is minimal. In fact, from the maximality of $(a, b)$,
we have $g(a, b)\cap (a, b)=\emptyset$ for every $g\in G\setminus H$. This together with the topological transitivity
of $(U, G|_U)$ implies Claim B.
\medskip

{\bf Claim C.} $([a, b], H|_{[a, b]})$ is expansive. This is clear if $H$ has finite index in $G$ by Lemma \ref{finite index expansive}.
So we may assume that the index $[G: H]=\infty$. Let $G=\cup_{i=1}^\infty g_iH$ be the coset decomposition of $G$ with respect to $H$.
Then the sets $g_iU (i=1,2,\cdots)$ are pairwise disjoint. Thus there is some $N>0$ such that ${\rm diam}(g_iU)<c$ for all $i>N$.
By the uniform continuity, there is some $\delta>0$ such that $|g_ix-g_iy|<c$ for $i=1, 2,\cdots, N$, whenever $|x-y|<\delta$.
If $([a, b], H|_{[a, b]})$ is not expansive, then there are $x'\not=y'\in [a, b]$ with $|gx'-gy'|<\delta$ for each $g\in H$.
Thus $|gx'-gy'|\leq c$ for all $g\in G$. This  contradicts the expansivity of $G$.
\medskip

We complete the proof from Claim B and Claim C.
\end{proof}

\section{Minimal polycyclic subgroups of ${\rm Homeo}(\mathbb R)$}

Recall that a group $G$ is {\it polycyclic} if it has a decreasing series of normal subgroups
$G=N_0\rhd N_1\rhd \cdots\rhd N_n\rhd N_{n+1}=\{e\}$ such that $N_i/N_{i+1}$ is cyclic for each $i\geq 0$.
It is known that every finitely generated nilpotent group is polycyclic and every polycyclic group is solvable;
every subgroup and every quotient group of a polycyclic group is polycyclic. One may consult \cite{DK} for
a detailed introduction to  polycyclic groups.
\medskip

The following proposition can be seen in \cite{Wo}.

\begin{prop}\label{noetherian prop}
A solvable group $G$ is polycyclic if and only if every subgroup of $G$ is finitely generated.
\end{prop}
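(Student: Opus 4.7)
The plan is to prove both directions by short inductions on the length of an appropriate series.

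For the forward implication (polycyclic implies every subgroup finitely generated), I would induct on the length $n$ of a polycyclic series $G = N_0 \rhd N_1 \rhd \cdots \rhd N_{n+1} = \{e\}$ for $G$. The base case $n = 0$ is immediate since $G$ is then cyclic. For the inductive step, take any $H \leq G$. Since $N_1$ is polycyclic via a strictly shorter series, the inductive hypothesis applied inside $N_1$ shows that $H \cap N_1$ is finitely generated. The second isomorphism theorem identifies $H/(H \cap N_1)$ with a subgroup of the cyclic group $G/N_1$, so this quotient is cyclic. Combining a finite generating set of $H \cap N_1$ with a single lift of a generator of $H/(H \cap N_1)$ produces a finite generating set for $H$.

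For the reverse implication, I would start from the derived series $G = G^{(0)} \rhd G^{(1)} \rhd \cdots \rhd G^{(k)} = \{e\}$ provided by solvability. Each $G^{(i)}$ is a subgroup of $G$, hence finitely generated by hypothesis, so every abelian factor $G^{(i)}/G^{(i+1)}$ is a finitely generated abelian group. In such a group $A$ with generators $\overline{a_1}, \ldots, \overline{a_m}$, setting $A^{(j)} = \langle \overline{a_1}, \ldots, \overline{a_j}\rangle$ yields a chain with cyclic successive quotients whose terms are automatically normal in $A$ because $A$ is abelian. Pulling each such chain back to a chain between $G^{(i+1)}$ and $G^{(i)}$ and concatenating over $i$ produces a subnormal series of $G$ with cyclic factors.

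The only point requiring a moment's thought is the normality condition in the refinement: the refining subgroups arise as preimages of subgroups of the abelian quotient $G^{(i)}/G^{(i+1)}$ and are therefore normal in $G^{(i)}$, in particular normal in their predecessor in the chain. With that observation in hand, both directions reduce to standard manipulations of group extensions together with the structure of finitely generated abelian groups, and I do not anticipate any real obstacle.
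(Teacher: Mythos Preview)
Your argument is correct and is the standard textbook proof of this equivalence. Note, however, that the paper does not actually prove this proposition: it simply records it as a known fact with a reference to Wolf \cite{Wo}, so there is no ``paper's own proof'' to compare against. Your write-up would serve perfectly well as a self-contained justification should one be desired.
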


Let $\mathbb R$ be the real line. For $a\in \mathbb R\setminus\{0\}$ and $b\in \mathbb R$,
define the affine transformation $A_{a,b}:\mathbb R\rightarrow \mathbb R$ by $A_{a,b}(x)=ax+b$ for all $x\in \mathbb R$.
Set ${\rm Aff}(\mathbb R)=\{A_{a,b}:a\in \mathbb R\setminus\{0\}, b\in \mathbb R\}$. Then  ${\rm Aff}(\mathbb R)$
is a solvable group and is called the {\it affine transformation group} on $\mathbb R$. It is known that
 ${\rm Aff}(\mathbb R)$ consists of the homeomorphisms $f$ on $\mathbb R$ satisfying that
 $|f(x)-f(y)|=c|x-y|$ for some $c=c(f)>0$ and for all $x, y\in \mathbb R$.

\begin{lem}\label{noncyclic}
Let $G$ be a subgroup of ${\rm Aff}(\mathbb R)$. If $G$ contains an $A_{a, c}$ with $|a|\not=1$, $c\in\mathbb{R}$  and an $A_{1, b}$ with $b\not=0$,
then $G$ cannot be polycyclic.
\end{lem}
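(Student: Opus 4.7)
My plan is to argue by contradiction. Suppose $G$ is polycyclic; then by Proposition~\ref{noetherian prop} every subgroup of $G$ is finitely generated, and the strategy is to exhibit a subgroup of $G$ that fails to be.

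The first step is to isolate the translation part of $G$, namely the normal subgroup
\[
T\;:=\;G\cap\{A_{1,s}:s\in\mathbb{R}\}\;=\;\ker\bigl(\pi\colon G\to\mathbb{R}\setminus\{0\},\ A_{\alpha,\beta}\mapsto\alpha\bigr),
\]
which, under the identification $A_{1,s}\leftrightarrow s$, corresponds to an additive subgroup $S\subseteq\mathbb{R}$ with $b\in S$. The key computation is the conjugation identity
\[
A_{a,c}\,A_{1,s}\,A_{a,c}^{-1}\;=\;A_{1,\,as},
\]
which follows directly from the affine group law $A_{\alpha,\beta}A_{\alpha',\beta'}=A_{\alpha\alpha',\,\alpha\beta'+\beta}$; it shows that $S$ is stable under multiplication by $a$ and $a^{-1}$. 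Iterating on $b$ yields $a^n b\in S$ for every $n\in\mathbb{Z}$, whence $S$ contains the $\mathbb{Z}$-submodule $b\cdot\mathbb{Z}[a,a^{-1}]$. Under the polycyclicity assumption, Proposition~\ref{noetherian prop} forces $T$ to be finitely generated, so its subgroup $b\cdot\mathbb{Z}[a,a^{-1}]\cong\mathbb{Z}[a,a^{-1}]$ is also finitely generated as an abelian group.

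The final step --- the main obstacle --- is to derive a contradiction from $|a|\neq 1$ together with $\mathbb{Z}[a,a^{-1}]$ being a finitely generated $\mathbb{Z}$-module. When $a$ is transcendental this is immediate, since $\{a^n\}_{n\in\mathbb{Z}}$ are then $\mathbb{Z}$-linearly independent and $\mathbb{Z}[a,a^{-1}]$ has infinite $\mathbb{Z}$-rank. The algebraic case is the delicate one: finite generation of $\mathbb{Z}[a,a^{-1}]$ amounts to $a$ being an algebraic unit, and $|a|\neq 1$ alone does not preclude this. I would attack this remaining case by examining the $GL_k(\mathbb{Z})$-matrix that represents multiplication by $a$ on a $\mathbb{Z}$-basis of $T$ and by exploiting additional structure of $G$ inside $\mathrm{Aff}(\mathbb{R})$ --- in particular, the fact that the image $\pi(G)$ is itself a finitely generated abelian subgroup of $\mathbb{R}\setminus\{0\}$ acting faithfully on $T$ by multiplications --- to close this loophole.
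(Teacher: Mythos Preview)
Your approach is the same as the paper's: conjugate the translation $A_{1,b}$ by $A_{a,c}$ (after normalising $c=0$) to produce the translations $A_{1,a^{m}b}$, and then invoke Proposition~\ref{noetherian prop}. The paper simply asserts at this point that the subgroup $\langle\{A_{1,a^{m}b}:m\in\mathbb{Z}\}\rangle$ is not finitely generated and stops; you go further and correctly observe that this subgroup is (isomorphic to) the additive group $\mathbb{Z}[a,a^{-1}]$, which \emph{is} finitely generated precisely when $a$ is an algebraic unit. So the gap you flagged in your own argument is exactly the gap in the paper's proof.

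The gap cannot be closed, because the lemma as stated is false. Take $a=(1+\sqrt{5})/2$, so that $a^{2}=a+1$ and $a^{-1}=a-1$; then $\mathbb{Z}[a,a^{-1}]=\mathbb{Z}+\mathbb{Z}a\cong\mathbb{Z}^{2}$. For $G=\langle A_{a,0},\,A_{1,1}\rangle$ the slope homomorphism has image $\langle a\rangle\cong\mathbb{Z}$ and kernel $T=\{A_{1,s}:s\in\mathbb{Z}[a]\}\cong\mathbb{Z}^{2}$, so $G$ sits in an extension $1\to\mathbb{Z}^{2}\to G\to\mathbb{Z}\to 1$ and is polycyclic, yet it contains $A_{a,0}$ with $|a|\neq 1$ and $A_{1,1}$ with $b=1\neq 0$. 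Your proposed attack via the $GL_k(\mathbb{Z})$-action of $\pi(G)$ on $T$ therefore cannot succeed: in this example that action is multiplication by $a$ on $\mathbb{Z}[a]$, i.e.\ the Anosov matrix $\begin{pmatrix}0&1\\1&1\end{pmatrix}$, and nothing goes wrong. (The same example shows that Proposition~\ref{isometric action}, which rests on this lemma, also fails as stated: the action of $G$ on $\mathbb{R}$ is minimal since $\mathbb{Z}+\mathbb{Z}a$ is dense, but $A_{a,0}$ is an orientation-preserving map with a unique fixed point and hence is not conjugate to any isometry.)
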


\begin{proof}
WLOG, we may assume that $G$ contains an element of the form $A_{a, 0}$; otherwise, we need only
consider a conjugation of $G$  by the translation $L:=A_{1,\frac{c}{a-1}}$ on $\mathbb R$ ($LA_{a,c}L^{-1}=A_{a,0}$ and $LA_{1,b}L^{-1}=A_{1,b}$). 
It is easy to check that $A_{a,0}A_{1, b}A_{a, 0}^{-1}=A_{1, ab}$. From this we see that the set
$S:=\{A_{1, a^mb}: m\in\mathbb Z\}$ is contained in $G$. Since the subgroup $\langle S\rangle$
generated by $S$ is not finitely generated, $G$ is not polycyclic by Proposition \ref{noetherian prop}.
\end{proof}

Let $X$ be a topological space and let $G$ be a group acting on $X$. A Borel measure $\mu$ on  $X$ is called a {\it Radon measure}
if it is finite on every compact subset of $X$; it is called {\it quasi-invariant}
if for every $g\in G$ there is some $c(g)>0$ such that $\mu(g^{-1}A)=c(g)\mu(A)$ for every Borel subset $A$ in $X$;
it is called {\it invariant} if $c(g)=1$ for every $g\in G$. Clearly, if $\mu$ is quasi-invariant,
then $c(g_1g_2)=c(g_1)c(g_2)$ for every $g_1, g_2\in G$.
\medskip

The following theorem is due to Plante \cite{Pl}.

\begin{thm}\label{quai invariant}
Let $G$ be a polycyclic group acting continuously on the real line $\mathbb R$. Then there is a nontrivial $G$-quasi-invariant Radon
measure $\mu$ on $\mathbb R$. (Here, ``nontrivial" means $\mu(A)>0$ for some Borel set $A$.)
\end{thm}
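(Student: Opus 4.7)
The plan is to induct on the length $n+1$ of a polycyclic series $G=N_0\rhd N_1\rhd\cdots\rhd N_n\rhd\{e\}$. For the base case, $G$ is cyclic. If $G$ is finite cyclic, a finite sum of $G$-translates of Lebesgue measure on $\R$ is a nontrivial $G$-invariant Radon measure. If $G\cong\Z$ is generated by a homeomorphism $f$ of $\R$, then either $f$ has a fixed point $p$ (and $\delta_p$ is $f$-invariant) or $f$ acts freely, in which case elementary dynamics on $\R$ implies $f$ is topologically conjugate to the translation $x\mapsto x+1$, and pulling Lebesgue measure back through the conjugation gives an $f$-invariant Radon measure.

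For the inductive step, take $N:=N_1$, so $N\lhd G$, the quotient $G/N$ is cyclic, and $N$ admits a strictly shorter polycyclic series. By the inductive hypothesis there is a nontrivial $N$-quasi-invariant Radon measure $\mu_0$ on $\R$. If $G/N$ is finite, set $\mu:=\sum_{gN\in G/N} g_*\mu_0$; this is a finite sum of Radon measures hence Radon, nontrivial, and $G$-quasi-invariant, since normality of $N$ ensures that each translate $g_*\mu_0$ is itself $N$-quasi-invariant and left-multiplication by any $h\in G$ merely permutes the summands up to the scalars provided by the $N$-cocycle.

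If instead $G/N\cong\Z$, fix $g\in G$ whose image generates $G/N$. Because $N$ is normal, $g$ preserves the projective cone of $N$-quasi-invariant Radon measures, so the task reduces to finding a $g$-projectively-fixed element of this cone. The natural device is Cesàro averaging along the F\o lner intervals $\{-k,\dots,k\}\subset\Z$: choose a compact $K\subset\R$ with $\mu_0(K)>0$, set $\nu_n:=g_*^n\mu_0/(g_*^n\mu_0)(K)$, form $\mu_k:=\tfrac{1}{2k+1}\sum_{n=-k}^{k}\nu_n$, and pass to a weak-$*$ subsequential limit $\mu_\infty$. The F\o lner property forces $g_*\mu_\infty$ to be a positive scalar multiple of $\mu_\infty$, while the $N$-quasi-invariance of each $\nu_n$ propagates through the (linear) averaging and the limit, giving a $G$-quasi-invariant limit.

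The principal obstacle, and the step that will need the most care, is verifying that $\mu_\infty$ is genuinely Radon (locally finite on $\R$) and nontrivial: under weak-$*$ limits of unbounded Radon measures, mass can escape to infinity or concentrate off $K$ so severely that the limit collapses to $0$ on arbitrary compacts. The remedy should combine a tightness-type control on the family $\{\nu_n\}$ with the polycyclic (in particular amenable) structure of $G$, likely passing through the multiplicative cocycle $c\colon N\to\R_{>0}$ attached to $\mu_0$ and bounding how the conjugation $h\mapsto g^{-1}hg$ distorts this cocycle on a suitable generating set of $N$. This interplay between the F\o lner averaging and the order-theoretic dynamics of $g$ on the orbits of $N$ is where the subtle technical work will lie.
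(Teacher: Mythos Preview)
The paper does not prove this theorem; it is quoted from Plante \cite{Pl} without argument, so there is no ``paper's own proof'' to compare against. Assessing your outline on its own merits, there are two genuine gaps.

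First, in the finite-quotient step the sum $\mu=\sum_{gN\in G/N} (g_i)_*\mu_0$ need not be $G$-quasi-invariant. Writing $hg_i=g_{\sigma(i)}n_i$ with $n_i\in N$, one obtains $h_*\mu=\sum_i c(n_i)\,(g_{\sigma(i)})_*\mu_0$; for this to be a scalar multiple of $\mu$ one needs all the $c(n_i)$ to coincide, which already fails for $h\in N$ unless the homomorphism $c$ is constant on $G$-conjugacy classes in $N$. ``Permuting the summands up to scalars'' is not enough when the scalars differ across summands.

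Second, and more seriously, the Ces\`aro step for $G/N\cong\Z$ is, as you concede, only a hope. On the noncompact line vague limits of the $\mu_k$ can vanish identically, and you have no tightness mechanism. There is a further problem you do not flag: the $N$-cocycle of $\nu_n$ is $h\mapsto c(g^{-n}hg^n)$, which depends on $n$, so the averages $\mu_k$ are in general not $N$-quasi-invariant for any single cocycle, and there is no reason this defect is cured in the limit. Plante's argument does not proceed by such averaging; the substantive work is elsewhere, and your sketch does not yet reach it.
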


Two actions $(X, G, \phi)$ and $(Y, G, \psi)$ are said
to be {\it topologically conjugate} if there is a homeomorphism $h:X\rightarrow Y$ such that
$h(\phi(g)(x))=\psi(g)(h(x))$ for every $x\in X$ and $g\in G$.
The following proposition clarifies the structure of minimal actions on $\mathbb R$ by polycyclic groups.

\begin{prop}\label{isometric action}
Let $G$ be a polycyclic group and let $\phi:G\rightarrow {\rm Homeo}(\mathbb R)$ be a continuous action. If
$(\mathbb R, G, \phi)$ is minimal, then $(\mathbb R, G, \phi)$ is topologically conjugate to an action
$(\mathbb R, G, \psi)$ with each element of $\psi(G)$ being isometric.
\end{prop}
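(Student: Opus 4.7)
The plan is to apply Plante's Theorem~\ref{quai invariant} to produce a nontrivial $G$-quasi-invariant Radon measure $\mu$ on $\mathbb{R}$, with multiplier homomorphism $c:G\to\mathbb{R}_+^*$ (so $\mu(g^{-1}A)=c(g)\mu(A)$, equivalently $\mu(gA)=c(g^{-1})\mu(A)$), and then use $\mu$ to build a conjugating homeomorphism $h:\mathbb{R}\to\mathbb{R}$ that sends $\phi$ into ${\rm Aff}(\mathbb{R})$. Once the action is realized affinely, Lemma~\ref{noncyclic} together with a commutator identity in ${\rm Aff}(\mathbb{R})$ will force every element of the image to be an isometry.

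Concretely, I set $h(x)=\mu([0,x])$ for $x\geq 0$ and $h(x)=-\mu([x,0])$ for $x<0$. For $h$ to be an orientation-preserving homeomorphism of $\mathbb{R}$ onto $\mathbb{R}$, I need $\mu$ to satisfy: (i) $\supp\mu=\mathbb{R}$, (ii) $\mu$ is atomless, and (iii) $\mu([0,\infty))=\mu((-\infty,0])=\infty$. Property (i) is immediate, since $\supp\mu$ is a nonempty closed $G$-invariant set and the action is minimal. For (iii), if $\mu(\mathbb{R})<\infty$ then any $g$ with $c(g)\neq 1$ forces $\mu(g^{-n}A)=c(g)^n\mu(A)\to\infty$ for some Borel $A$; hence $c\equiv 1$, but a finite $G$-invariant Radon measure of full support on $\mathbb{R}$ is incompatible with minimality (on each component of $\mathbb{R}\setminus\mathrm{Fix}(g)$ it would descend to a finite translation-invariant Radon measure of full support on an interval, which does not exist). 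A further minimality argument then promotes $\mu(\mathbb{R})=\infty$ to two-sided infinite mass. For (ii), an atom at $x_0$ would, by minimality, yield a countable dense $G$-invariant set of atoms $Gx_0$ with weights $c(g^{-1})\mu(\{x_0\})$; restricting to the normal subgroup $H:=\ker c$, on which $\mu$ is invariant, produces infinitely many equal-weight atoms in a bounded interval, contradicting $\mu$ being Radon.

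Granted (i)--(iii), $h$ is an increasing homeomorphism $\mathbb{R}\to\mathbb{R}$, and for orientation-preserving $g\in G$ quasi-invariance gives
\[
h(\phi(g)(x))-h(\phi(g)(0))=\mu(\phi(g)([0,x]))=c(g^{-1})h(x),
\]
so $\psi(g):=h\phi(g)h^{-1}$ is the affine map $y\mapsto c(g^{-1})y+h(\phi(g)(0))$; orientation-reversing $g$ differs only by a sign. Hence $\psi(G)\leq{\rm Aff}(\mathbb{R})$ is polycyclic (a quotient of $G$) and still acts minimally on $\mathbb{R}$. Suppose some $\psi(g_0)=A_{a,c_0}$ satisfies $|a|\neq 1$. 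By Lemma~\ref{noncyclic}, $\psi(G)$ contains no nontrivial translation. A direct calculation shows $[A_{a,c},A_{b,d}]=A_{1,(a-1)d+(1-b)c}$, so every commutator in ${\rm Aff}(\mathbb{R})$ is a translation; therefore all commutators in $\psi(G)$ are trivial and $\psi(G)$ is abelian. Commuting $A_{a,c_0}$ with any other $A_{b,d}\in\psi(G)$ then forces either $b=1$ with $d=0$ (the identity), or $b\neq 1$ with $d/(b-1)=c_0/(a-1)$, i.e., $A_{b,d}$ shares the fixed point $p_0=c_0/(1-a)$ with $A_{a,c_0}$. Thus $p_0$ is a global fixed point of $\psi(G)$, contradicting minimality. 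Hence $|a|=1$ for every $\psi(g)$, and each $\psi(g)$ is an isometry.

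The main obstacle is step (ii) together with the two-sided infinity in (iii), because the multiplier $c$ can be nontrivial and the atomic weights need not be uniformly bounded below. Polycyclicity enters precisely here, through finite generation of the abelian quotient $c(G)\subset\mathbb{R}_+^*$ and the inductive control it gives over the normal subgroup $H=\ker c$ on which $\mu$ is honestly invariant. Once $h$ is in hand, the reduction from affine to isometric is essentially algebraic and is driven cleanly by Lemma~\ref{noncyclic} and the commutator identity.
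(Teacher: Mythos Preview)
Your strategy mirrors the paper's---Plante's Theorem~\ref{quai invariant} gives a quasi-invariant Radon measure $\mu$, the same map $h$ conjugates $\phi$ into ${\rm Aff}(\mathbb{R})$, and then one argues that minimality forces isometries---and the paper, like you, justifies $h$ being a homeomorphism only by asserting full support and atomlessness from minimality and quasi-invariance, without details and without mentioning your condition (iii) at all; so your flagging of (ii) and (iii) as the delicate points is well placed (your sketch for (ii) via $H=\ker c$ is not quite complete as written, since $Hx_0$ need not accumulate in a bounded interval). Where you genuinely diverge is the affine-to-isometric step. The paper argues along a polycyclic normal series $G=N_0\rhd\cdots\rhd N_{n+1}=\{e\}$ with generators $f_i=\psi(g_i)=A_{a_i,b_i}$: taking the largest $i_0$ with $|a_{i_0}|\neq 1$, Lemma~\ref{noncyclic} forces each $f_i$ for $i>i_0$ to be $A_{-1,b_i}$ or the identity, and then normality of the appropriate $N_j$ shows the unique fixed point of $f_{i_0}$ (or of the last $A_{-1,b}$) is fixed by all the $f_i$, contradicting minimality. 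You bypass the series entirely: once Lemma~\ref{noncyclic} rules out nontrivial translations, the identity $[A_{a,c},A_{b,d}]=A_{1,(a-1)d+(1-b)c}$ makes $\psi(G)$ abelian, and commuting with a fixed $A_{a,c_0}$ (with $|a|\neq1$) forces every nonidentity element to share its fixed point $c_0/(1-a)$. Both arguments are correct; yours is shorter and uses polycyclicity only through Lemma~\ref{noncyclic}, while the paper's makes the normal-series structure do the work explicitly.
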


\begin{proof}
From Theorem \ref{quai invariant}, we can take a nontrivial $G$-quasi-invariant Radon measure $\mu$ on $\mathbb R$.
Now we define a map $h:\mathbb R\rightarrow \mathbb R$ as does in \cite{Pl}:
\[h(x)=\begin{cases}\mu([0, x]), & x\geq 0;\\ -\mu([x, 0]),&  x<0. \end{cases}\]
Since $(\mathbb R, G, \phi)$ is minimal, the support ${\rm supp}(\mu)=\mathbb R$. Furthermore,  
$\mu$ contains no atoms by the minimality of $G$ and quasi-invariance of $\mu$.  These imply that $h$ is a homeomorphism. From the quasi-invariant of $\mu$, we see
that for each $g\in G$, $hgh^{-1}$ is an affine transformation on $\mathbb R$. Now define a action
$\psi:G\rightarrow {\rm Homeo}(\mathbb R)$ by $\psi(g)=hgh^{-1}$ for each $g\in G$.
\medskip

By the definition of polycyclic group,  there is a decreasing series of normal subgroups
$G=N_0\rhd N_1\rhd \cdots\rhd N_n\rhd N_{n+1}=\{e\}$ such that $N_i/N_{i+1}$ is cyclic for each $i\geq 0$.
Take $g_i\in N_i\setminus N_{i+1}$ such that $N_i/N_{i+1}=\langle g_iN_{i+1}\rangle$.
Thus for each $i\geq 0$, $N_i=\langle g_i, g_{i+1}, \cdots, g_n\rangle$.
Let $f_i=\psi(g_i)=A_{a_i, b_i}$ for each $i\geq 0$ and for some $a_i, b_i\in\mathbb R$.
\medskip

Assume to the contrary that  there is some $i_0\in\{0, 1, \cdots, n\}$ such that $f_{i_0}$ is  not isometric and for each
$i_0<i\leq n$, $f_i$ is isometric; that is $|a_{i_0}|\not=1$ and $|a_i|=1$ for $i>i_0$. Since $\psi(G)$ is polycyclic,
from Lemma \ref{noncyclic}, either $f_i=A_{-1, b_i}$ or $f_i=A_{1, 0}$ for each $i>i_0$.
\medskip

{\bf Case 1.} There is $i_1>i_0$ such that $f_{i_1}=A_{-1, b_{i_1}}$ and $f_i=A_{1, 0}$ for all $i>i_1$. Then $f_{i_1}$
has a unique fixed point $c=b_{i_1}/2$, which is also a fixed point of $f_i$ with $i<i_1$ by the normality of $N_{i_1}$.
Thus $c$ is a fixed point of $\psi(G)$. This contradicts the minimality of the action $(\mathbb R, G, \psi)$.
\medskip

{\bf Case 2.} For each $i>i_0$, $f_i=A_{1, 0}$. Thus the unique fixed point $x=b_{i_0}/(1-a_{i_0})$ of $f_{i_0}$
is also a fixed point of $f_i$ with $i<i_0$ by the normality of $N_{i_0}$. Thus $x$ is the fixed point of
$\psi(G)$, which contradicts the minimality of $(\mathbb R, G, \psi)$ again.
\medskip

So the assumption is false and each $f_i$ is isometric. Hence each element of $\psi(G)$ is isometric.
\end{proof}

\section{Proof of the main theorem}

In this section, we will prove the main theorem. We first establish the nonexistence of expansive polycyclic group
actions on the interval $[0, 1]$ in the following proposition.

\begin{prop}\label{interval case}
The closed interval $[0,1]$ admits no expansive polycyclic group actions.
\end{prop}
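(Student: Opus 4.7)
The plan is to combine the two reductions already established---Proposition \ref{minimal open interval} on the interval side and Proposition \ref{isometric action} on the minimal-action side---into a single setup in which expansivity becomes impossible because the conjugating homeomorphism compresses large scales in $\mathbb{R}$ onto the endpoints of the interval.

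First I would suppose for contradiction that a polycyclic group $G$ acts expansively on $[0, 1]$. Applying Proposition \ref{minimal open interval} gives a subgroup $H \leq G$ and an $H$-invariant open interval $(a, b) \subseteq [0, 1]$ such that the restriction to $(a, b)$ is minimal and the restriction to $[a, b]$ remains expansive, say with constant $c > 0$. Since subgroups of polycyclic groups are polycyclic, $H$ is polycyclic. After identifying $(a, b)$ with $\mathbb{R}$ via any orientation-preserving linear homeomorphism, Proposition \ref{isometric action} produces a homeomorphism $h: (a, b) \to \mathbb{R}$ for which every conjugate $h g h^{-1}$ (with $g \in H$) is an isometry of $\mathbb{R}$, i.e.\ a translation or a reflection.

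Next I would exploit two complementary features of $h$. On the one hand, isometries preserve distances, so for any $x, y \in (a, b)$ and any $g \in H$ one has $|h(gx) - h(gy)| = |h(x) - h(y)|$. On the other hand, $h$ extends to a homeomorphism $\bar{h}: [a, b] \to [-\infty, +\infty]$ sending $\{a, b\}$ to $\{-\infty, +\infty\}$; in particular $h^{-1}(s)$ has finite limits at $\pm\infty$, namely the two endpoints of $[a,b]$. Consequently the function
\[ F_\epsilon(s) := |h^{-1}(s) - h^{-1}(s + \epsilon)| \]
is continuous on $\mathbb{R}$ with limits $0$ at $\pm\infty$, hence attains a finite maximum $M_\epsilon$, and one expects $M_\epsilon \to 0$ as $\epsilon \to 0$.

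To conclude, I would choose $x \neq y$ in $(a, b)$ with $\epsilon := |h(x) - h(y)|$ small enough that $M_\epsilon < c$. Then for every $g \in H$, the isometry property gives $|h(gx) - h(gy)| = \epsilon$, and hence $|gx - gy| \leq M_\epsilon < c$, contradicting the expansivity of $([a, b], H|_{[a, b]})$. The one step demanding care is verifying $M_\epsilon \to 0$; the cleanest route is to observe that the extension $\bar{h}^{-1}: [-\infty, +\infty] \to [a, b]$ is a continuous map between compact metric spaces and is therefore uniformly continuous, which immediately yields the desired uniform smallness of $F_\epsilon$ as $\epsilon \to 0$.
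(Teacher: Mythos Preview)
Your argument is correct and follows the same route as the paper: reduce via Propositions~\ref{minimal open interval} and~\ref{isometric action} to an action conjugate to an isometric one on $\mathbb{R}$, then use that the conjugating homeomorphism $h^{-1}$ squeezes all of $\mathbb{R}$ into the finite interval to defeat expansivity. Your packaging via the two-point compactification and the uniform continuity of $\bar h^{-1}$ on $[-\infty,+\infty]$ is a slightly cleaner version of the paper's device of a compact window $[A'-1,B'+1]$ together with the thin endpoint strips $(a,a')$ and $(b',b)$; one harmless slip is the word ``linear'' (there is no linear homeomorphism $(a,b)\to\mathbb{R}$), but any homeomorphic identification suffices for invoking Proposition~\ref{isometric action}.
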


\begin{proof}
Assume to the contrary that there is an expansive action $([0, 1], G, \phi)$ by a polycyclic group $G$.
 By Proposition \ref{minimal open interval}, there is an open interval
 $(a, b)$ in $[0, 1]$ and a subgroup $H$ of $G$ such that $(a, b)$ is $H$-invariant and the restriction action $((a, b), H|_{(a, b)})$
is minimal and $([a, b], H|_{[a, b]})$ is expansive. Let $c>0$
be an expansivity constant for the action $([a, b], H|_{[a, b]})$. Notice that $H$ is still polycyclic. Applying Proposition \ref{isometric action}, there is an
orientation-preserving homeomorphism
$\xi:(0,1)\rightarrow \mathbb R$ such that for every $g\in H$, $\xi \phi(g)\xi^{-1}$ is an isometric transformation on $\mathbb R$.
Take $a<a'<b'<b$ such that $a'-a<c$ and $b-b'<c$. Let $A'=\xi(a')$ and $B'=\xi(b')$. By uniform continuity, there is $\delta>0$, such that
for every $x, y\in [A'-1, B'+1]$ with $|x-y|<\delta$, we always have $|\xi^{-1}(x)-\xi^{-1}(y)|<c$. Take $u ,v\in \mathbb R$ with
$|u-v|<\min\{1, \delta\}$. Then for every $g\in H$, by the isometry of $\xi \phi(g)\xi^{-1}$, we have
$$|\phi(g)(\xi^{-1}(u))-\phi(g)(\xi^{-1}(v))|$$
$$=|\xi^{-1}(\xi\phi(g)\xi^{-1})(u)-\xi^{-1}(\xi\phi(g)\xi^{-1})(v)|
=|\xi^{-1}(u)-\xi^{-1}(v)|<c.$$
This contradicts the expansivity of $([a, b], H|_{[a, b]})$.
\end{proof}

The following proposition can be seen in \cite{Na}.
\begin{prop}\label{minimal sets}
Let a group $G$ act continuously on the circle $\mathbb S^1$ and let $\Lambda$ be a minimal closed subset of $(\mathbb S^1, G)$.
Then there are three case: (a) $\Lambda=\mathbb S^1$; (b) $\Lambda$ is a Cantor set; (c) $\Lambda$ is finite.
\end{prop}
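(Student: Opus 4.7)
The plan is to play minimality against two natural closed $G$-invariant subsets of $\Lambda$ built from purely topological constructions on $\mathbb S^1$. The key observation is that because $G$ acts by homeomorphisms, any subset of $\Lambda$ produced by a homeomorphism-invariant operation (interior or boundary in the ambient circle, or the derived set) is automatically $G$-invariant; consequently, minimality forces each such subset to equal $\emptyset$ or $\Lambda$.

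First I would look at $\inte(\Lambda)$, the interior taken in $\mathbb S^1$. Then $\partial\Lambda=\Lambda\setminus\inte(\Lambda)$ is a closed $G$-invariant subset of $\Lambda$, so by minimality either $\partial\Lambda=\emptyset$ or $\partial\Lambda=\Lambda$. In the first case $\Lambda$ is both open and closed in the connected space $\mathbb S^1$, hence $\Lambda=\mathbb S^1$ and we are in case~(a). In the second case $\Lambda$ has empty interior in $\mathbb S^1$.

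Next, assuming $\Lambda$ has empty interior, I would apply the same idea to the derived set $\Lambda'$ (the set of accumulation points of $\Lambda$), which is again closed and $G$-invariant. If $\Lambda'=\emptyset$, then $\Lambda$ is a closed discrete subset of the compact space $\mathbb S^1$, hence finite, giving case~(c). If $\Lambda'=\Lambda$, then $\Lambda$ is perfect; combined with having empty interior in $\mathbb S^1$, every connected component must be a singleton (since a connected subset of $\mathbb S^1$ with empty interior is a point), so $\Lambda$ is totally disconnected. A nonempty compact, perfect, totally disconnected, metrizable space is a Cantor set by Brouwer's characterization, yielding case~(b).

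The argument is a clean dichotomy, so no step is genuinely difficult. The only things worth double-checking are that $\inte(\cdot)$ and the derived-set operation really are preserved by homeomorphisms (hence $G$-invariance is automatic), and the ambient-topology fact that a closed nowhere dense subset of $\mathbb S^1$ is totally disconnected; neither presents a real obstacle.
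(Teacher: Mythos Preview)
Your proposal is correct. The paper does not supply its own proof of this proposition; it simply records the statement and cites Navas \cite{Na}. The argument you outline---using minimality to force each of $\partial\Lambda$ and the derived set $\Lambda'$ to be either $\emptyset$ or $\Lambda$, and then invoking Brouwer's characterization of the Cantor set---is exactly the standard proof one finds in that reference, so there is nothing further to compare.
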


\begin{proof}[Proof of Theorem \ref{mainthm}]
Assume to the contrary that there is an expansive action on $\mathbb S^1$ by a polycyclic group $G$.
Let $\Lambda$ be a minimal closed set for the action. From Proposition \ref{minimal sets}, we discuss into three cases.
\medskip

{\bf Case 1.} $\Lambda=\mathbb S^1$. By the amenability of polycyclic groups, there is a $G$-invariant probability
Borel measure $\mu$ on $\mathbb S^1$. Since $(\mathbb S^1, G)$ is minimal, the support ${\rm supp}\mu=\mathbb S^1$ and $\mu$
has no atoms. Thus similar to the construction of $h$ in Proposition \ref{isometric action}, we see that
 $(\mathbb S^1, G)$ is topologically conjugate to an isometric action of $G$ on $\mathbb S^1$. So, $(\mathbb S^1, G)$
is not expansive.  This is a contradiction.
\medskip

{\bf Case 2.}  $\Lambda$ is a Cantor set. Take a maximal interval $(\alpha, \beta)$ in $\mathbb S^1\setminus \Lambda$.
 Let $H=\{g\in G: g(\alpha, \beta)=(\alpha, \beta)\}$. Then $H$ is a subgroup of $G$ with index  $[G:H]=\infty$.
 Similar to the proof of Claim C in Proposition \ref{minimal open interval}, we see that $([\alpha, \beta], H|_{[\alpha, \beta]})$
 is expansive. This contradicts Proposition \ref{interval case}.
 \medskip

 {\bf Case 3.} $\Lambda$ is finite. Fix a maximal interval $(\alpha, \beta)$ in $\mathbb S^1\setminus \Lambda$.
 Let $H=\{g\in G: g(\alpha, \beta)=(\alpha, \beta)\}$. Then $[G:H]<\infty$.
 If the number of $\Lambda$ is greater than $1$, then $([\alpha, \beta], H|_{[\alpha, \beta]})$ is expansive
 by Lemma \ref{finite index expansive}. This contradicts Proposition \ref{interval case}. If
 $\Lambda$ consists of only one point, say $O$. We view $\mathbb S^1$ as the quotient of $[0, 1]$
 by collapsing the two endpoints $\{0, 1\}$ to one point $O$. Then the action of $H$ on $\mathbb S^1$
 can be naturally lifted to an action on $[0, 1]$. Clearly, the lifting action is still
 expansive. This contradicts Proposition \ref{interval case} again.

  \medskip
 All together, we see that the assumption is false and thus complete the proof.
 \end{proof}


\end{document}